\newcommand{\zz}{{\Bbb Z}}
\newcommand{\qq}{{\Bbb Q}}
\newcommand{\pp}{{\Bbb P}}
\newcommand{\iis}{{\mathbf{i}}}
\newcommand{\ddim}{\operatorname{dim}}
\newcommand{\op}[1]{\operatorname{#1}}
\newcommand{\kbar}{\overline{k}}
\newcommand{\ffi}{\varphi}
\newcommand{\row}{\rightarrow}
\newcommand{\lrow}{\longrightarrow}
\renewcommand{\leq}{\leqslant}
\renewcommand{\geq}{\geqslant}
\newcommand{\nichego}[1]{}
\newcommand{\ov}[1]{\overline{#1}}
\newcommand{\laz}{{\Bbb L}}
\newcommand{\llaz}{{\mathbf{L}}}
\newcommand{\cl}{{\cal L}}
\newcommand{\cm}{{\cal M}}
\newcommand{\Qed}{\hfill$\square$\smallskip}
\newenvironment{proof}{\noindent{\it Proof}:}{\vskip 5mm}
\newtheorem{prop}{Proposition}[section]{\bf}{\it}
\newtheorem{thm}[prop]{Theorem}{\bf}{\it}
{\bf}{\it}
{\bf}{\it}
{\bf}{\it}
\newtheorem{conj}[prop]{Conjecture}{\bf}{\it}
{\bf}{\it}
\newtheorem{exa}[prop]{Example}{\bf}{\it}
\newtheorem{rem}[prop]{Remark}{\bf}{}
\newtheorem{cor}[prop]{Corollary}{\bf}{\it}
{\bf}{\it}
\begin{document}

\title{Algebraic Cobordism as a module over the Lazard ring}
\author{Alexander Vishik\footnote{School of Mathematical Sciences, University
of Nottingham}}
\date{}

\maketitle

\begin{abstract}
In this paper we study the structure of the Algebraic Cobordism ring of a variety
as a module over the Lazard ring, and show that it has relations in positive codimensions.
We actually prove the stronger graded version.
This extends the result of M.Levine-F.Morel \cite{LM} claiming that this module has generators
in non-negative codimensions. As an application we compute the Algebraic Cobordism ring of a curve.
The main tool is Symmetric Operations in Algebraic Cobordism - \cite{SOpSt}.
\end{abstract}

\tableofcontents

\section{Introduction}
\label{Intro}
Cohomology theories are used to provide invariants of algebraic varieties.
The {\it Algebraic Cobordism} theory of Levine-Morel $\Omega^*$ - \cite{LM}
is the universal oriented cohomology
theory in the algebro-geometric context. It is much larger than
$\op{CH}^*$ and $K_0$, and contains these classical theories as little faces.
The fact that rationally any formal group law, including the universal one, is isomorphic to
an additive FGL implies that rationally $\Omega^*_{\qq}$ splits into the direct sum of
copies of $\op{CH}^*_{\qq}$, and for the respective graded (by codimension of support)
theory we have a natural identification:
$$
Gr\Omega^*_{\qq}=\op{CH}^*_{\qq}\otimes_{\zz}\laz.
$$
So, all the relations in $Gr\Omega^*$ (modulo the relations in Chow groups)
are torsion. But integrally, the structure of Algebraic Cobordism is much more
complex, and little is known about it. The number of types of non-cellular
varieties for which $\Omega^*(X)$ was computed is close to zero.
In this article we study the general structure of $\Omega^*(X)$ as a module over the
{\it Lazard ring}. Namely, the relations of this module.
It appears that these relations are generated by those
concentrated in positive codimensions - Theorem \ref{osn}.
This fits nicely with the result of M.Levine-F.Morel - \cite{LM} claiming
that the generators of this module are in non-negative codimensions.
Thus, we have an exact sequence of graded $\laz$-modules:
$$
M_1\row M_0\row\Omega^*(X)\row 0,
$$
where $M_0$ and $M_1$ are free modules with generators concentrated in nonnegative/positive codimensions,
respectively.

As an application we compute the Algebraic Cobordism ring of a curve - Theorem \ref{curve}.
We also make a guess how the {\it syzygies} of our $\laz$-module should behave - see Conjecture \ref{syz}
and the discussion after it.

Our main result follows from the stronger graded version. The main tool here is the use of the
{\it Symmetric operations} - see \cite{SOpSt}. These unstable operations are more subtle than the
{\it Landweber-Novikov operations} and permit to work effectively with the torsion effects,
which the current article should demonstrate.

Everywhere below we assume that the characteristic of the base field $k$ is zero.

{\bf Acknowledgements:} I would like to thank the Referee for useful suggestions which substantially
improved the exposition.

\section{$BP$-theory}
\label{BP}

The Algebraic Cobordism theory $\Omega^*$ of Levine-Morel is universal in the sense that it has unique
morphism $\Omega^*\stackrel{\rho_A}{\lrow}A^*$ to any other oriented cohomology theory - \cite[Th.1.2.6]{LM}
(for general facts on oriented theories see \cite{LM,PS,SU}).
For a smooth variety $X$, the ring $\Omega^*(X)$ is additively generated by the classes
of projective maps $[V\stackrel{v}{\row}X]$ from smooth varieties, subject to certain relations - see
\cite{LM,LP}. It has natural dimensional = $\ddim(V)$, and codimensional = $(\ddim(X)-\ddim(V))$ gradings
(preserved by pushforwards, respectively pullbacks).

To each oriented theory $A^*$ one can assign the {\it Formal Group Law} $(A,F_A)$, where
$A=A^*(\op{Spec}(k))$ is the {\it coefficient ring of the theory},
and $F_A\in A[[x,y]]$ describes how to compute the Chern class of the tenzor product
of line bundles: $c^A_1(\cl\otimes\cm)=F_A(c^A_1(\cl),c^A_1(\cm))$ - it can be defined using the
{\it Segre embedding} - see, for example, \cite[1.1.3]{LM}, or \cite[2.3]{SU} (the Chern classes
come with the orientation - see \cite{PS} and \cite{LM}).
In the case of Algebraic Cobordism, the respective FGL will be the {\it universal} one
$(\laz,F_U)$. In particular, $\Omega^*(\op{Spec}(k))$ is naturally the Lazard ring
$\laz\cong\zz[x_1,x_2,\ldots]$ (one polynomial generator in every positive dimension) - see
\cite[Th.1.2.7]{LM}. This ring is additively generated by the classes of smooth projective varieties,
with some relations. It also coincides with the $MU^*(pt)$ from Topology.

There is canonical {\it Hurewicz} embedding $\laz\hookrightarrow\zz[b_1,b_2,\ldots]$ corresponding
(via universality of $(\laz,F_U)$) to the formal group law over $\zz[b_1,b_2,\ldots]$
which is obtained from the additive
one by the generic ("stable") change of variable $y=x+b_1x^2+b_2x^3+\ldots$.
Rationally, it is an isomorphism. And the coefficients at particular monomials $\ov{b}^{\ov{I}}$ give
us linear functions $f_{\ov{I}}:\laz\row\zz$. These are the {\it characteristic numbers} - computed on the
class of the variety $X$ they give us the characteristic numbers of $X$ (alternatively, can be obtained
as degrees of zero-cycles given by various polynomials in Chern classes of minus tangent bundle $(-T_X)$).

Algebraic Cobordism theory combines effects related to all prime numbers.
It is convenient to separate the information concerning particular prime $p$
in order to simplify the situation.
In Topology this is achieved with the help of the {\it Brown-Peterson theory} $BP^*$
constructed in \cite{BP}. It is obtained from the localized at $p$
{\it complex-oriented cobordism} $MU_{\zz_{(p)}}^*$ by an explicit multiplicative projector
produced out of {\it Adams operations}
(see, for example, \cite{Ad} and \cite[I.3]{Wi}).
Exactly the same projector $\rho$ 
(characterized by the property that $\rho([\pp^n])=[\pp^n]$, if $n+1$ is a power of $p$,
and zero otherwise)
can be applied in the algebro-geometric context to $\Omega^*_{\zz_{(p)}}$
producing the algebraic variant of $BP^*$ which we still call by the same name.
Then, as in Topology - see \cite[Th.4]{Q}, $\Omega^*_{\zz_{(p)}}$ additively splits
into a direct sum of copies of $BP^*$.
It follows from \cite[Pr.4.11]{SU} that $BP^*$ can be obtained from Algebraic Cobordism
$\Omega^*$ of Levine-Morel by change of coefficients:
$BP^*(X)=\Omega^*(X)\otimes_{\laz}BP$.
The coefficient ring $BP=BP^*(\op{Spec}(k))$ will be the same as for the topological counterpart.

Namely, $BP=\zz_{(p)}[P_1,P_2,\ldots]$, where $P_i$ is a $\nu_i$-element. That is,
$\ddim(P_i)=p^i-1$,
all characteristic numbers of $P_i$ are divisible by $p$, and the only "additive"
characteristic number (corresponding to $b_1^{p^i-1}$) of it is not divisible by $p^2$
(where we consider $BP$ as a subring of $\laz$).
We can also set $P_0=p$.
The choice of the above generators is not unique, but they
can be selected as the coefficients of the formal $p$ (see \cite[Lem.3.17]{Wi}):
\begin{equation}
\label{[p]}
[p]=\frac{p\cdot_{BP}t}{t}\equiv\sum_{l\geq 0}P_lt^{p^l-1}\,\,(\,mod\,I(p)^2),
\end{equation}
where $p\cdot_{BP}t$ is the multiplication by $p$ in the sense of the FGL $F_{BP}$, and
$I(p)$ is the ideal consisting of elements whose all characteristic numbers are divisible by $p$,
that is, the ideal generated by $P_i,\,i\geq 0$.
It is convenient to introduce the notation:
$$
[p]_{\leq i}:=\sum_{l=0}^iP_lt^{p^l-1}\in BP[t].
$$

\section{Symmetric and Steenrod Operations}
\label{SO}

Our principal method is based on the use of {\it Symmetric operations} in algebraic cobordism.
These operations are related to {\it Steenrod operations} of Quillen's type there.
Let me start with the latter (where we need a $\zz_{(p)}$-local version only).

Recall that an {\it operation} between theories is a transformation commuting with the pull-back maps.
Among them, the most structured are {\it multiplicative operations}, that is, operations acting as
homomorphisms of rings.
To each multiplicative operation $A^*\stackrel{G}{\row}B^*$ one can assign the morphism of the
respective FGLs:
$$
\begin{CD}
(A,F_A)& @>{(\ffi_G,\gamma_G)}>>&(B,F_B),
\end{CD}
$$
where $\ffi_G:A\row B$ is the restriction of $G$ to the $\op{Spec}(k)$, and
$\gamma_G(x)\in B[[x]]\cdot x$ describes the action of $G$ on the Chern classes:
$G(c^A_1(\cl))=\gamma_G(c^B_1(\cl))$.
The fact that $(\ffi_G,\gamma_G)$ is a morphism of FGLs amounts to the equation:
\begin{equation}
\label{mFGL}
\ffi_G(F_A)(\gamma_G(u),\gamma_G(v))=\gamma_G(F_B(u,v)).
\end{equation}
In a good situation, such as ours, there is a $1$-to-$1$ correspondence between
morphisms of FGLs and multiplicative operations - see \cite[Th.6.8]{SU}.
And in the case where the source theory is $\Omega^*$, everything is determined by $\gamma_G$ alone
(since $(\laz,F_{\Omega})$ is the {\it universal FGL}),
moreover, one can choose $\gamma_G$ in an arbitrary way, as long as the leading coefficient $b_0$ of it
is invertible (here you even do not need \cite{SU}, but can use the {\it universality} of $\Omega^*$ due
to M.Levine-F.Morel - \cite[Th.1.2.6]{LM} and the {\it reparametrization} of I.Panin-A.Smirnov -
\cite{PS,P,Sm1}).

Let $p$ be a prime. Choose representatives $\ov{i}=\{i_1,\ldots,i_{p-1}\}$ of all non-zero cosets
$(\,mod\,p)$, and denote as $\iis$ their product.
Acting as above, we obtain the multiplicative {\it Total Steenrod operation}:
\begin{equation}
\label{St}
\Omega^*_{\zz_{(p)}}\stackrel{St(\ov{i})}{\lrow}\Omega^*_{\zz_{(p)}}[[t]][t^{-1}],
\end{equation}
with the {\it inverse Todd genus}
$\frac{\gamma_{St(\ov{i})}(x)}{x}=\prod_{l=1}^{p-1}(x+_{\Omega}(i_l)\cdot_{\Omega}t)$ - see \cite[6.4]{SOpSt} (note, that our leading coefficient is invertible).

Any multiplicative operation $G$ (from $\Omega^*$ elsewhere) whose Todd genus starts with $1$
({\it stable} operation, in other words) is a specialization of the {\it Total Landweber-Novikov operation} -
the operation corresponding to the "generic stable" change of variable
$\gamma_{S^{Tot}_{L-N}}=x+b_1x^2+b_2x^3+\ldots$ - \cite[Ex.4.1.25]{LM}:
$$
\xymatrix @-0.2pc{
\Omega^* \ar @{->}[rr]^(0.4){S_{L-N}^{Tot}} \ar @{->}[rrd]_{G} & &
\Omega^*[b_1,b_2,\ldots] \ar @{->}[d] \\
& & B^*.
}
$$
Here one uses the universality of $\Omega^*$,
and maps the independent variables $b_1,b_2,\ldots$ to the coefficients of $\gamma_G$.
In other words, we apply $\rho_B$, and plug particular values for $b_i$'s into the formula:
$$
S^{Tot}_{L-N}=\sum_{\ov{I}}\ov{b}^{\ov{I}}S^{\ov{I}}_{L-N}.
$$
If the Todd genus starts with the unit $b_0$, then on $\Omega^n$ our operation is a composition of the
specialization as above (corresponding to $b_0^{-1}\gamma_G$) with the multiplication by $b_0^n$.
I would call this the {\it generalized specialization}.
In the case of $St(\ov{i})$, the leading coefficient of our Todd genus is a power series in $t$ starting
with $\iis\cdot t^{p-1}$, and so is invertible (in the target theory).
Consequently, $St(\ov{i})$ is a generalized specialization of the Total Landweber-Novikov operation,
and in particular,
all components of $St(\ov{i})$ are $\laz_{(p)}$-linear combinations (infinite, in general) of the Landweber-Novikov operations (the fact that the combinations are infinite should not scare the reader, since,
for each particular variety, the infinite tail of the expression will act trivially by dimensional
considerations). Notice, that these expressions will depend on the degree $n$ of the element, which reflects
the fact that the operation is unstable.

Below the choice of the coset representatives will not be important, so I will omit it from
the notations.

Denote as $\square^p$ the $p$-th power operation. Then it can be shown - see \cite{SOpSt} that
the non-positive degree (in $t$) part of the operation $(\square^p-St)$ is divisible by formal $[p]$.
Moreover, it was shown in \cite[Th.7.1]{SOpSt} that
one can divide canonically and obtain the {\it Total Symmetric operation} for a given prime:
$$
\Omega_{\zz_{(p)}}\stackrel{\Phi}{\lrow}\Omega_{\zz_{(p)}}[t^{-1}].
$$
Such an operation controls all $p$-primary divisibilities of characteristic numbers,
and this is exactly what we are exploiting.
Operation $\Phi$ is not multiplicative, but is almost additive - it becomes additive if
we cut off the component of degree zero.

The above operations can be extended from $\Omega^*$ to any theory obtained from it by a
{\it multiplicative projector}. In particular, to the theory $BP^*$.
Considering the composition
$$
BP^*\hookrightarrow\Omega_{\zz_{(p)}}^*\stackrel{St}{\row}
\Omega_{\zz_{(p)}}^*[[t]][t^{-1}]\twoheadrightarrow
BP^*[[t]][t^{-1}]
$$
we get the multiplicative operation on $BP^*$-theory which we still denote as $St$.
Similarly, we get a Total Symmetric Operation $BP^*\stackrel{\Phi}{\row}BP^*[t^{-1}]$.

\begin{prop}
\label{StP}
Let $\prod_{l=1}^mP_{k_l}$ be some monomial of dimension $d=\sum_{l=1}^m(p^{k_l}-1)$. Then
$$
St(\prod_{l=1}^mP_{k_l})\equiv t^{-pd}\cdot\prod_{l=1}^m[p]_{\leq k_l}\,\,\,(\,mod\,I(p)^{m+1}).
$$
\end{prop}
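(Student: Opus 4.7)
The plan is induction on $m$. The inductive step $m \geq 2$ is immediate from multiplicativity of $St$: if each factor $St(P_{k_l}) \equiv t^{-p(p^{k_l}-1)}[p]_{\leq k_l} \pmod{I(p)^2}$ and lies in $I(p)\cdot BP[[t]][t^{-1}]$, then the contribution of an $I(p)^2$-error in one factor, multiplied by the remaining $m-1$ factors (each in $I(p)$), lies in $I(p)^{m+1}$ and can be discarded.

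For the base case $m=1$ the starting point is the FGL-morphism identity
\[
\gamma_{St}([p]_{BP}(v)) = [p]_{\tilde F}(\gamma_{St}(v)),
\]
where $\tilde F = \varphi_{St}(F_{BP})$ is the FGL pushed forward by $St$ and $[p]_{\tilde F}(z) = \sum_n \varphi_{St}(a_n) z^n$ is its $p$-series, the $a_n \in BP$ being the coefficients of $[p]_{BP}$. A key preliminary observation is that $\varphi_{St}(I(p)) \subseteq I(p)\cdot BP[[t]][t^{-1}]$: reducing $\gamma_{St}$ modulo $I(p)$ gives $v^p - vt^{p-1}$, which is a morphism between \emph{additive} FGLs over $\mathbb{F}_p[[t]][t^{-1}]$, so by universality of $F_{BP}$ the underlying ring map $BP \to \mathbb{F}_p[[t]][t^{-1}]$ factors through $BP/I(p) = \mathbb{F}_p$.

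Now I would compute both sides of the identity modulo $I(p)^2$. On the LHS, $[p]_{BP}(v) \equiv \sum_l P_l v^{p^l}$ lies in $I(p)$, and $\gamma_{St}(u) = \alpha(0,t)\, u + O(u^2)$ with $\alpha(0,t) = \prod_{l=1}^{p-1}[l]_{BP}(t) \equiv \mathbf{i}\, t^{p-1} \pmod{I(p)}$, giving $\gamma_{St}([p]_{BP}(v)) \equiv \mathbf{i}\, t^{p-1}\sum_l P_l v^{p^l}$ mod $I(p)^2$. On the RHS, the preliminary fact gives $[p]_{\tilde F}(z) \equiv \sum_l \varphi_{St}(P_l) z^{p^l}$ mod $I(p)^2$; since each $\varphi_{St}(P_l) \in I(p)$, one may replace $\gamma_{St}(v)^{p^l}$ by its Frobenius reduction $v^{p^{l+1}} - v^{p^l} t^{(p-1)p^l}$ modulo $I(p)$. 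Equating coefficients of $v^{p^k}$ produces the recursion
\[
\varphi_{St}(P_{k-1}) - \varphi_{St}(P_k)\, t^{(p-1)p^k} \equiv \mathbf{i}\, t^{p-1} P_k \pmod{I(p)^2},
\]
with the convention $\varphi_{St}(P_{-1}) = 0$.

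Solving this recursion by induction on $k$ yields $\varphi_{St}(P_k) \equiv t^{-p(p^k-1)}[p]_{\leq k} \pmod{I(p)^2}$; the apparent coefficient $-\mathbf{i}$ in front of the new $P_k$ term at each step is absorbed into $+1$ because $\mathbf{i}\equiv -1 \pmod p$, which forces $(1+\mathbf{i})P_k \in I(p)^2$. The main technical nuisance is bookkeeping such unit corrections modulo $I(p)^2$; the crucial enabling step is the preliminary factoring-through-$\mathbb{F}_p$ for $\varphi_{St}$, without which one cannot reduce the master identity to a clean coefficient-by-coefficient comparison.
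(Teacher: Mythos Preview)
Your proof is correct and follows essentially the same route as the paper: reduce to $m=1$ by multiplicativity, invoke the FGL-morphism identity for the $p$-series, reduce $\gamma_{St}$ modulo $I(p)$ to $v^p - vt^{p-1}$, and extract the same recursion $St(P_{k-1}) - St(P_k)t^{(p-1)p^k} \equiv -P_k t^{p-1}$ modulo $I(p)^2$. The only cosmetic difference is that you justify $\varphi_{St}(I(p))\subset I(p)$ by a direct universality argument (factoring through $BP/I(p)=\ff_p$), whereas the paper appeals to the stability of $I(p)$ under Landweber--Novikov operations; both arguments are valid and lead to the same computation.
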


\begin{proof}
Since $St$ is multiplicative, it is sufficient to prove the case $m=1$.
Recall that $\gamma_{St}(x)=x\cdot\prod_{j=1}^{p-1}(x+_{BP}i_j\cdot_{BP}t)$.
Since we are working $(\,mod\,I(p)^2)$, the choice of the coset representatives $i_j$
will not be important for us.
Indeed, the different choices will give us multiplicative operations with $\gamma$'s
congruent modulo $I(p)$. Hence, (the coefficients of) the respective expressions in terms of the Landweber-Novikov
operations will be also congruent modulo $I(p)$. But $P_k\in I(p)$, and this ideal is
stable under (the $BP$-projections of) the Landweber-Novikov operations.

By the same reason, we can substitute the FGL in the formula for $\gamma_{St}$ by the {\it additive}
FGL $(\,mod\, p)$ (because all elements of positive dimensions in $BP$, as well as $p$, belong to $I(p)$,
and so the FGL of $BP$-theory is $I(p)$-congruent to the additive $(\,mod\,p)$ one).
In other words, we can assume that $\gamma(x)=x^p-xt^{p-1}$.
Since $St$ is multiplicative, and so defines a morphism of $FGL$'s,
from (\ref{mFGL}) we get the equation:
$$
p\cdot_{\ffi_{St}(BP)}\gamma(x)=\gamma(p\cdot_{BP}x),
$$
from which we obtain (using (\ref{[p]}) and the fact that $I(p)^2$ is stable under
Landweber-Novikov operations):
$$
\sum_{l\geq 0}St(P_l)(x^p-xt^{p-1})^{p^l}\equiv
\left(\sum_{l\geq 0}P_lx^{p^l}\right)^p-\left(\sum_{l\geq 0}P_lx^{p^l}\right)t^{p-1}
\hspace{5mm}(\,mod\,I(p)^2).
$$
Comparing coefficients at $x^{p^{k}}$ we get:
$$
St(P_{k-1})-St(P_k)t^{p^k(p-1)}\equiv-P_kt^{p-1}\,\,\,\,\,(\,mod\,I(p)^2).
$$
This immediately gives:
\begin{equation*}
\begin{split}
St(P_k)\equiv & P_kt^{(p^k-1)-p(p^k-1)}+P_{k-1}t^{(p^{k-1}-1)-p(p^k-1)}+\ldots+P_0t^{-p(p^k-1)}\\
\equiv & t^{-p(p^k-1)}\cdot [p]_{\leq k}\,\,\,\,\,(\,mod\,I(p)^2).
\end{split}
\end{equation*}
\Qed
\end{proof}

In particular, modulo $I(p)^{m+1}$, $St(I(p)^m)$ is concentrated in
non-positive degrees of $t$.

\begin{cor}
\label{StId}
The $t^{-d(p-1)}$-component of $St$ is the identity map on the dimension $d$ component of
$I(p)^m/I(p)^{m+1}$, for all $m$.
\end{cor}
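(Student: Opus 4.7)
The plan is to deduce the corollary directly from Proposition~\ref{StP} by extracting one specific coefficient of $t$ from the formula proved there.

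First I would note that $I(p)^m/I(p)^{m+1}$ is additively spanned by the classes of monomials $\prod_{l=1}^m P_{k_l}$ with $k_l \geq 0$, since $I(p) = (P_0,P_1,P_2,\ldots)$. So, to check that the $t^{-d(p-1)}$-component of $St$ acts as the identity on the dimension-$d$ piece of $I(p)^m/I(p)^{m+1}$, it is enough to verify that for every such monomial of dimension $d = \sum_{l=1}^m (p^{k_l}-1)$, the coefficient of $t^{-d(p-1)}$ in $St\bigl(\prod_{l=1}^m P_{k_l}\bigr)$ equals $\prod_{l=1}^m P_{k_l}$ modulo $I(p)^{m+1}$.

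Next I would apply Proposition~\ref{StP}. Modulo $I(p)^{m+1}$, $St(\prod_{l=1}^m P_{k_l})$ is congruent to
$$
t^{-pd}\cdot\prod_{l=1}^m [p]_{\leq k_l} = t^{-pd}\cdot\prod_{l=1}^m\Bigl(\sum_{j_l=0}^{k_l} P_{j_l}\, t^{p^{j_l}-1}\Bigr).
$$
Expanding, the coefficient of $t^{-d(p-1)}$ is the sum, over tuples $(j_1,\ldots,j_m)$ with $0\leq j_l\leq k_l$ and $\sum_{l=1}^m(p^{j_l}-1) = d$, of the monomials $\prod_{l=1}^m P_{j_l}$. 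Since $d = \sum_{l=1}^m(p^{k_l}-1)$ and each term satisfies $p^{j_l}-1\leq p^{k_l}-1$, the equality of sums forces $j_l = k_l$ for every $l$. Hence the only surviving term is $\prod_{l=1}^m P_{k_l}$, as required.

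There is no real obstacle here: Proposition~\ref{StP} does all the work, and the corollary is just a matter of comparing exponents of $t$. The only point worth double-checking is the elementary inequality argument forcing the tuple to be unique, and the preliminary reduction to monomial generators of $I(p)^m/I(p)^{m+1}$.
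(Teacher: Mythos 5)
Your proof is correct and is precisely the intended argument: the paper leaves Corollary~\ref{StId} without a written proof, treating it as an immediate consequence of Proposition~\ref{StP}, and your coefficient extraction together with the uniqueness-of-tuple inequality is exactly what is implicit there. The reduction to monomial generators is legitimate since $St$, being a ring homomorphism, is additive, and the inequality $p^{j_l}-1 \leq p^{k_l}-1$ correctly forces $j_l=k_l$ for all $l$.
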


\begin{prop}
\label{SymIm}
$$
\Phi(I(p)^{m+1})\subset I(p)^m[t^{-1}].
$$
\end{prop}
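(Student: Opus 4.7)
The plan is to convert the desired filtration statement on $\Phi(x)$ into a filtration statement on $[p]\cdot\Phi(x)$ via the defining identity of $\Phi$, and then to read off the bound on $\Phi(x)$ coefficient-by-coefficient, exploiting the polynomial structure of the associated graded ring $\op{gr}_{I(p)}BP$.

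First I would invoke the characterisation of the Total Symmetric Operation from \cite[Th.7.1]{SOpSt}: the operation $\Phi(x) \in BP[t^{-1}]$ is uniquely determined by
\[
\bigl([p]\cdot\Phi(x)\bigr)^{\leq 0} \;=\; \bigl((\square^p - St)(x)\bigr)^{\leq 0},
\]
where $(\cdot)^{\leq 0}$ denotes projection to non-positive $t$-degrees. For $x \in I(p)^{m+1}$, I would bound the right-hand side: $\square^p(x)=x^p \in I(p)^{p(m+1)} \subset I(p)^{m+1}$, while $St(I(p)^{m+1}) \subset I(p)^{m+1}\cdot BP[[t]][t^{-1}]$ because $St$ is a generalised specialisation of the Total Landweber--Novikov operation whose $BP$-projection preserves each power $I(p)^r$ (a stability fact already used in the proof of Prop \ref{StP}). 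Hence $\bigl([p]\cdot\Phi(x)\bigr)^{\leq 0}$ lies in $I(p)^{m+1}[t^{-1}]$.

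Next I would extract the bound on $\Phi(x)$ via a coefficient induction. Writing $\Phi(x) = \sum_{i=-N}^{0} \Phi_i t^i$ and $[p] = \sum_{j\geq 0} [p]_j t^j$, two observations are crucial: $[p]_0 = p$, and $[p]_j \in I(p)$ for every $j \geq 0$ (since $F_{BP}$ reduces to the additive formal group law modulo $I(p)$, so $[p]_F(t) \in I(p)\cdot BP[[t]]$). The coefficient of $t^n$ for $-N \leq n \leq 0$ in the non-positive truncation reads $p\cdot\Phi_n + \sum_{j=1}^{N+n} [p]_j \Phi_{n-j}$ and lies in $I(p)^{m+1}$. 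At $n=-N$ only the term $p\Phi_{-N}$ survives; since $\op{gr}_{I(p)}BP \cong \ff_p[\ov{P}_0, \ov{P}_1, \ldots]$ is a polynomial ring in which $p$ maps to the non-zero-divisor $\ov{P}_0$, we deduce $\Phi_{-N} \in I(p)^m$. Proceeding upward in $n$: once all $\Phi_{n-j}$ with $j \geq 1$ are known to lie in $I(p)^m$, the tail $\sum_{j\geq 1} [p]_j \Phi_{n-j}$ lies in $I(p)\cdot I(p)^m = I(p)^{m+1}$, whence $p\Phi_n \in I(p)^{m+1}$ and therefore $\Phi_n \in I(p)^m$.

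The main obstacle is not the induction itself, which is mechanical, but rather pinning down the precise form of the defining identity for $\Phi$ and verifying the $I(p)$-stability of $St$; both are inputs drawn from \cite{SOpSt} and from the proof of Prop \ref{StP}. Crucially, the argument does not rely on any additivity of $\Phi$, so the failure of additivity on the $t^0$-component noted earlier is irrelevant here.
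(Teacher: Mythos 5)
Your proposal is correct and follows essentially the same route as the paper: both proceed from the defining relation $([p]\cdot\Phi(x))^{\leq 0} = (x^p - St(x))^{\leq 0}$, bound the right-hand side in $I(p)^{m+1}$ via the $I(p)$-stability of $St$, and then peel off the factor $[p]$ coefficient-by-coefficient by increasing induction on the power of $t$, using that $\ov{P}_0 = \ov{p}$ is a non-zero-divisor in $\op{gr}_{I(p)}BP$ (the paper phrases this as "$P_i$ are algebraically independent"). Your version simply spells out the bookkeeping of the induction that the paper leaves implicit.
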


\begin{proof}
Let $x\in I(p)^{m+1}$.
Since $[p]\cdot\Phi(x)\equiv(x^p-St(x))\,\,(\,\text{modulo positive powers of}\,\,t)$,
from Proposition \ref{StP} we get that
$[p]\cdot\Phi(x)\in I(p)^{m+1}[t^{-1}]\,\,(\,\text{modulo positive powers of}\,\,t)$.
Since $P_i$ are algebraically independent variables in $BP$, we obtain by an increasing induction
on the power of $t$ that all coefficients of $\Phi(x)$ belong to $I(p)^m$.
\Qed
\end{proof}

\section{Graded Algebraic Cobordism}
\label{GAC}

On $\Omega^*$, as well as on any other theory, we have the filtration by codimension of support:
$u\in F^r\Omega^*(X)$, if it vanishes when restricted to an open set $U$, whose complement has
codimension $\geq r$. This filtration is clearly respected by all cohomological operations (mapping
zero to zero).
Thus, such operations also act on the respective graded theory $Gr\Omega^*$ (for a non-additive
operation need to consider a "discrete derivative" here - see \cite{PO}).
Moreover, this action is substantially simpler, which is exploited below.

The graded theory $Gr\Omega^*$ is a direct sum of it's graded components $\Omega_{(r)}^*$,
each of which has the structure of a graded $\laz$-module (as the $\laz$-module structure on $\Omega^*(X)$
respects our filtration). Thus, we have two different gradings
in the picture. We are using the one which is compatible with the non-graded ($\Omega^*$)
version and with the
$\laz$-module structure, and call it {\it codimensional} (as opposed to "support codimensional").

\begin{thm}
\label{osnGr}
Let $X$ be a smooth quasi-projective variety. Then $Gr\Omega^*(X)$ as a module over $\laz$
can be defined by generators of non-negative codimensions and relations of positive codimensions.
That is, there is a presentation
$$
N_1\row N_0\row Gr\Omega^*(X)\row 0
$$
of our graded cobordism as the cokernel of free graded $\laz$-modules, where the
grading on $Gr\Omega^*(X)$ is the codimensional one,
the generators of $N_0$ have non-negative degrees, while the generators of $N_1$ have positive degrees.
\end{thm}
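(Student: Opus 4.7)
The plan is to work prime-by-prime, reducing to $BP$-theory, and then to bootstrap relations in $Gr BP^*(X)$ up in codimensional degree by applying the Steenrod and symmetric operations. Since $\Omega^*_{\zz_{(p)}}$ decomposes as a direct sum of copies of $BP^*$ via the multiplicative projector from \cite{Q}, and this projector respects the filtration by support codimension, it suffices to establish the analogous presentation for $Gr BP^*(X)$ as a $BP$-module, one prime at a time. Using Levine-Morel, pick generators $\{u_\alpha\}$ of $Gr BP^*(X)$ in codim $\geq 0$, yielding a surjection $N_0 \twoheadrightarrow Gr BP^*(X)$ of free graded $BP$-modules; let $K$ be the kernel. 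The goal is that $K$ is generated over $BP$ by elements of codim $> 0$.

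Fix a homogeneous relation $\rho = \sum \ell_\alpha e_\alpha \in K$ of codim $c \leq 0$. Since $\dim \ell_\alpha = \deg u_\alpha - c \geq 0$, each $\ell_\alpha$ has non-negative dimension; the summands with $\dim \ell_\alpha = 0$ contribute a Chow-level subrelation which, by compatibility with the rational splitting $Gr\Omega^*_\qq \cong \op{CH}^*_\qq \otimes \laz$, lives in codim $> 0$ and is handled directly. For the remaining case where all $\ell_\alpha \in I(p)$, we induct on the maximal depth $m$ with $\ell_\alpha \in I(p)^m$ for all $\alpha$. Apply the multiplicative total Steenrod operation to $\sum \ell_\alpha u_\alpha = 0$ to obtain $\sum_\alpha St(\ell_\alpha) \cdot St(u_\alpha) = 0$ in $Gr BP^*(X)[[t]][t^{-1}]$. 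By Proposition \ref{StP}, modulo $I(p)^{m+1}$ the Laurent polynomial $St(\ell_\alpha)$ equals $t^{-p \dim \ell_\alpha} \prod_l [p]_{\leq k_l^{(\alpha)}}$, whose top $t$-coefficient is $\ell_\alpha$ itself by Corollary \ref{StId}. Meanwhile $St(u_\alpha)$, as a generalized specialization of the total Landweber-Novikov operation, is a $t$-series whose non-leading $t$-coefficients are Landweber-Novikov transforms of $u_\alpha$ in strictly higher codim. Extracting a suitably chosen $t$-coefficient just below the top (where the product merely recovers $\sum \ell_\alpha u_\alpha$) from the vanishing identity yields, modulo $I(p)^{m+1}$, a new relation $\sum \ell_\alpha v_\alpha = 0$ in $Gr BP^*(X)$, where each $v_\alpha$ is a Landweber-Novikov transform of $u_\alpha$ of codim $\geq \deg u_\alpha + 1 > 0$. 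This is a positive-codim relation in $K$. Iterating, we express $\rho$ modulo $I(p)^{m+1} N_0$ as a $BP$-linear combination of positive-codim relations; the remainder, of strictly deeper $I(p)$-depth, is handled by the induction hypothesis. Proposition \ref{SymIm}, the analogous depth-reduction statement for the symmetric operation $\Phi$, guarantees termination.

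The main obstacle will be the delicate bookkeeping between the three gradings at play: the codimensional grading on $\Omega^*$, the support-codimensional filtration defining $Gr$, and the $I(p)$-adic depth on $BP$. In particular, we must verify that the unstable operations $St$ and $\Phi$ descend compatibly to the associated graded theory (using the discrete-derivative formalism of \cite{PO} for the non-additive $\Phi$), that the formal relations extracted as $t$-coefficients genuinely lift to relations in the free module $N_0$ rather than existing only in $Gr BP^*(X)$, and that the produced positive-codim relations are truly homogeneous generators in the required codimensions. Combined with the direct Chow-level treatment of the boundary case $c = 0$, these verifications form the technical heart of the proof.
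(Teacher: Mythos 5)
Your high-level strategy (reduce to $BP$, induct on $I(p)$-depth, use $St$ and $\Phi$ to push relations to higher codimension) matches the paper's, but the core mechanism is substantively different and has a gap that the paper's choices are specifically designed to avoid.

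The decisive difference is the choice of presentation and how $St$ acts on it. The paper does \emph{not} take abstract Levine--Morel generators; it works with the specific surjection $\op{CH}(X)\otimes_{\zz}\laz\twoheadrightarrow Gr\Omega(X)$ of \cite[Cor.4.5.8]{LM}, reduced to $\op{CH}^r(X)\otimes BP\twoheadrightarrow BP^*_{(r)}(X)$ for each support codimension $r$. The crucial payoff is \cite[Pr.7.14]{SOpSt}: on the graded pieces, the Steenrod and Symmetric operations are ``Chow-semilinear,'' i.e.\ $St(z\cdot u)=z\cdot\iis^r t^{r(p-1)}\cdot St(u)$ and similarly for $\Phi$, where $St(u)\in BP[[t]][t^{-1}]$ lives entirely in the \emph{coefficient} ring. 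Consequently $St(\alpha)$ and $\Phi(\alpha)$ for a relation $\alpha=\sum_j z_j\otimes u_j$ stay inside $\op{CH}^r(X)\otimes BP[t,t^{-1}]$, supported in the same finite family $\{z_j\}$, so their $t$-coefficients are genuine elements of the free module and genuine relations. By contrast, you apply $St$ multiplicatively and write $\sum St(\ell_\alpha)\,St(u_\alpha)=0$; here $St(u_\alpha)$ lives in $GrBP^*(X)[[t]][t^{-1}]$, not in $BP[[t]][t^{-1}]$, and its $t$-coefficients are not elements of your free module $N_0$. You flag this yourself at the end (``genuinely lift to relations in the free module $N_0$''), but this is not a bookkeeping issue to be cleaned up later --- it is the obstruction the Chow-based presentation is chosen to eliminate. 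With arbitrary generators, there is no reason the extracted identities lift to $K$.

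Two further ingredients of the paper's argument are missing from your outline. First, the relation $(\square^p-St-[p]\cdot\Phi):BP^*\to BP^*[[t]]t$ together with the vanishing of the $p$-th power map $\square^p$ on $BP^*_{(r)}$ for $r>0$ is what ties the nonpositive-$t$ part of $St(\alpha)$ to $-[p]\cdot\Phi(\alpha)$; this is how the relevant component of $\alpha$ (which by Corollary \ref{StId} reappears at $t^{(r-d)(p-1)}$ modulo $I(p)^{m+1}$) gets factored as $P_0\cdot(\ldots)+\sum_{l>0}P_l\cdot(\ldots)$, splitting it into a $p$-divisible piece plus a piece in $M_{>c}$. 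Your use of $\Phi$ as an unspecified ``termination'' device does not capture this. Second, the induction on $I(p)$-depth alone does not finish the argument: the paper concludes by iterating the $P_0$-part to get that the quotient $K/L$ (relations modulo the submodule generated by positive-codim ones) is infinitely $p$-divisible inside a finite-rank free $\zz_{(p)}$-module $N=\oplus_j z_j\cdot BP_d$, hence zero. This finiteness argument is essential and absent from your sketch. Finally, the paper disposes of codimension $0$ not via rational compatibility but directly: $BP^*$ is constant, so $BP^*_{(0)}=BP\cdot 1$ is free.
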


\begin{proof}
Due to the result of M.Levine-F.Morel - \cite[Cor.4.5.8]{LM}, we have the surjection:
\begin{equation}
\label{surj}
\op{CH}(X)\otimes_{\zz}\laz\stackrel{\rho}{\twoheadrightarrow} Gr\Omega(X).
\end{equation}
Here we already imposed some relations of codimensions $r>0$, as $\op{CH}^r(X)$ is not free abelian, in
general.
I claim that the kernel of this map as an $\laz$-module is generated in positive codimensions.
It is sufficient to prove this statement $\otimes\zz_{(p)}$, for all primes $p$.
Then $\Omega_{\zz_{(p)}}^*$ and $Gr\Omega_{\zz_{(p)}}^*$ split into direct sum of copies
of $BP^*$ and $GrBP^*$. So, it is sufficient to prove the statement for $GrBP^*$.

The problem splits into ones corresponding to various graded pieces, where
the component $BP^*_{(r)}$ of "support codimension" $r$ of $GrBP$-theory is covered as
(the $BP$-projection of (\ref{surj})):
$$
\op{CH}^r(X)\otimes_{\zz}BP\stackrel{\rho}{\twoheadrightarrow} BP_{(r)}^*(X).
$$
$BP^*$ is a {\it constant} theory, that is, for any variety $X$,
$BP^*(\op{Spec}(k(X)))=BP^*(\op{Spec}(k))$ (see \cite[Def.4.4.1, Lem.4.4.2]{LM}, and
\cite[2.1, Pr.4.11]{SU}). This means that the pull back map $BP^*(\op{Spec}(k))\row BP^*(X)$
has a splitting - the restriction to the generic point $BP^*(X)\row BP^*(\op{Spec}(k(X)))$, the kernel of which consists of elements with positive codimension of support.  Hence,
$BP^*_{(0)}=BP\cdot 1$ - and so is a free $BP$-module. Thus, we can assume that $r$ is positive.

We have an action of Steenrod and Symmetric operations on $BP_{(r)}^*(X)$ - see
\cite[Pr.7.14]{SOpSt}, given by:
for $z\in\op{CH}^r(X)$, and $u\in BP_d$,
$$
St(z\cdot u)=z\cdot\iis^r\cdot t^{r(p-1)}\cdot St(u);\hspace{1cm}
\Phi(z\cdot u)=z\cdot\iis^r\cdot t^{r(p-1)}\cdot\Phi(u)_{\leq -r(p-1)},
$$
where we denote $\rho(z\otimes u)$ as $z\cdot u$, $\Phi(u)_{\leq -r(p-1)}$ is the
$t^{\leq -r(p-1)}$-part of $\Phi(u)$, and $\iis$ is the product of our
coset representatives - see (\ref{St}).

Let $\alpha=\sum_{j\in J}z_j\otimes u_j$ be an element of non-positive codimension $c=(r-d)$ such that
$\sum_{j\in J}z_j\cdot u_j=0\in BP_{(r)}^*(X)$. That is, $\alpha$ is a {\it relation}.
Let us say that $\alpha$ is {\it supported} in
$\{z_j,\,j\in J\}$.
We want to show that our element belongs to the $BP$-submodule $M_{>c}$ generated by similar
relations of larger codimension.

We will prove by the increasing induction on $m$ that this is true modulo $I(p)^m$.
The case $m=0$ is evident.
Suppose that all $u_j$'s belong to $I(p)^m$.
Denote as $\Phi(\alpha)$ the expression
$$
\sum_{j\in J}z_j\otimes\iis^r\cdot t^{r(p-1)}\cdot\Phi(u_j)_{\leq-r(p-1)}
\in\op{CH}^r(X)\otimes_{\zz}BP[t^{-1}].
$$
It is again a relation (becomes zero if we substitute $\otimes$ by $\cdot\,$), because
$\rho(\Phi(\alpha))=\Phi(\rho(\alpha))=0$.
Moreover, it belongs to $\op{CH}^r(X)\otimes I(p)^{m-1}[t^{-1}]$ by the Proposition \ref{SymIm}.
We have:
$$
(\square^p-St-[p]\cdot\Phi):BP^*\row BP^*[[t]]t.
$$
Since the $p$-th power $\square^p$ is zero on $BP_{(r)}^*$, for $r>0$, we obtain that
the $t$-non-positive components of
$St$ and $-[p]\cdot\Phi$ coincide. But by Corollary \ref{StId}, the $t^{(r-d)(p-1)}$-component of
$St(\alpha)$ is congruent to $\pm\alpha$ modulo $I(p)^{m+1}$ (notice that $\iis\equiv -1\,(mod\,p)$).
Since $(r-d)(p-1)\leq 0$, this component
can be written as a sum of various components of $-\Phi(\alpha)$ (which are all "relations")
multiplied by various $P_l$, $l\geq 0$.
Notice, that all our elements are still supported in $\{z_j,\,j\in J\}$.
Separating $P_0$ from $P_l,l>0$, we get:
$\alpha\equiv p\alpha_1+\beta_1\,\,\,(\,mod\,I(p)^{m+1})$, where
$\alpha_1$ and $\beta_1$ are relations supported in $\{z_j,\,j\in J\}$, where $\beta_1\in M_{>c}$
and the coefficients of $\beta_1$ are in $I(p)^m$, while the coefficients of $\alpha_1$ are in $I(p)^{m-1}$.
Repeating these arguments for $\alpha_1,\alpha_2,\ldots$, we get that
$\alpha\equiv\beta\,\,\,(\,mod\,I(p)^{m+1})$, where $\beta\in M_{>c}$ has coefficients in $I(p)^m$
(and the same support). This finishes the proof of our $m$-inductive statement.

Taking $m$ large enough, and using the fact that $\ddim(P_i)>0$, for $i>0$,
we obtain that for arbitrary codimension $c\leq 0$ relation
$\alpha$ with support $\{z_j;j\in J\}$, and for arbitrary $n$, there exists $\beta\in M_{>c}$ with
the same support, such that $(\alpha-\beta)=p^n\cdot\gamma$ for some relation $\gamma$ with
mentioned support.

Consider $N=\oplus_{j\in J}z_j\cdot BP_d$ - the free $\zz_{(p)}$-module of finite rank.
Let $K$ be the submodule of relations, and $L$ be the submodule consisting of elements from $M_{>c}$.
Then $K\supset L$, and as we saw above,  $K/L$ is infinitely divisible by $p$. Hence, $K=L$,
and so, any relation of non-positive codimension can be expressed through relations of larger codimension.
Theorem is proven.
\Qed
\end{proof}

\begin{rem}
Notice, that without the use of the non-additive $0$-th Symmetric operation $\Phi^{t^0}$ we would
be able only to prove that relations are concentrated in non-negative codimensions.
\end{rem}

\begin{thm}
\label{osn}
Let $X$ be a smooth quasi-projective variety. Then $\Omega^*(X)$ as a module over $\laz$
can be defined by generators of non-negative codimensions and relations of positive codimensions.
\end{thm}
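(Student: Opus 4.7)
The plan is to deduce Theorem \ref{osn} from the graded version Theorem \ref{osnGr} by a standard filtered-lifting argument, using that $X$ is quasi-projective and hence of finite dimension $n$, so the support filtration $F^\bullet\Omega^*(X)$ satisfies $F^{n+1}\Omega^*(X)=0$. Both $N_0$ and $N_1$ from Theorem \ref{osnGr}, besides carrying the codimensional $\laz$-grading, decompose naturally over the index $r\in[0,n]$ of support codimension, since each generator is attached to a particular $\Omega^*_{(r)}(X)$. I will use this double structure to lift generators and relations piece by piece.

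First I would lift generators: for each homogeneous generator $g_\alpha\in\Omega^*_{(r_\alpha)}(X)$ of $N_0$, choose a homogeneous lift $\tilde g_\alpha\in F^{r_\alpha}\Omega^*(X)$ of the same $\laz$-codimension, and assemble these into a map $\tilde\pi:\tilde N_0\to\Omega^*(X)$ where $\tilde N_0\cong N_0$ as graded $\laz$-modules. Equip $\tilde N_0$ with the filtration $F^r\tilde N_0:=\bigoplus_{r_\alpha\geq r}\laz\cdot e_\alpha$. I claim $\tilde\pi$ maps $F^r\tilde N_0$ onto $F^r\Omega^*(X)$ for every $r$: this is proved by downward induction on $r$, with the case $r>n$ trivial, and the inductive step writing the class of $u\in F^r\Omega^*(X)$ in $\Omega^*_{(r)}(X)$ as an $\laz$-combination of the $g_\alpha$'s with $r_\alpha=r$ (using the graded surjectivity from Theorem \ref{osnGr}), subtracting off the corresponding lift to land in $F^{r+1}\Omega^*(X)$, and invoking the induction hypothesis.

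Next I would lift relations. For each homogeneous generator $\rho_\beta\in N_0^{(s_\beta)}$ of the graded kernel $\ker(N_0\to Gr\Omega^*(X))$, of positive $\laz$-codimension $c'_\beta$, the element $\tilde\pi(\rho_\beta)$ lies in $F^{s_\beta+1}\Omega^*(X)$ because its class modulo $F^{s_\beta+1}$ equals the vanishing image of $\rho_\beta$ in $\Omega^*_{(s_\beta)}(X)$. Using the surjectivity of $F^{s_\beta+1}\tilde N_0\to F^{s_\beta+1}\Omega^*(X)$ from the previous step, pick a homogeneous $\sigma_\beta\in F^{s_\beta+1}\tilde N_0$ of codimension $c'_\beta$ with $\tilde\pi(\sigma_\beta)=\tilde\pi(\rho_\beta)$ and set $\tilde\rho_\beta:=\rho_\beta-\sigma_\beta$. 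Then $\tilde\rho_\beta\in\ker\tilde\pi$ still has codimension $c'_\beta>0$, and the $\{\tilde\rho_\beta\}$ generate a free graded $\laz$-module $\tilde N_1$ with generators in positive codimensions, mapping into $\ker\tilde\pi$.

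It remains to check $\tilde N_1\to\ker\tilde\pi$ is surjective. For homogeneous $\alpha\in\ker\tilde\pi$, decompose $\alpha=\sum_s\alpha^{(s)}$ along $\tilde N_0=\bigoplus_s N_0^{(s)}$ and let $r_0$ be the smallest $s$ with $\alpha^{(s)}\neq 0$; since the higher-filtration contributions lie in $F^{r_0+1}\Omega^*(X)$, the class of $\alpha^{(r_0)}$ must vanish in $\Omega^*_{(r_0)}(X)$ and hence sits in the graded kernel. Theorem \ref{osnGr} then expresses $\alpha^{(r_0)}$ as an $\laz$-combination of the $\rho_\beta$ with $s_\beta=r_0$, and subtracting the same combination of $\tilde\rho_\beta$ from $\alpha$ puts us into $\ker\tilde\pi\cap F^{r_0+1}\tilde N_0$. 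The iteration terminates after at most $n+1$ steps because $F^{n+1}\tilde N_0=0$. The only step requiring genuine care is keeping the codimensional $\laz$-grading compatible with the support filtration throughout the lifting, so that homogeneity of the $\tilde g_\alpha$ and the $\sigma_\beta$ is never broken; all substantive content is already carried by Theorem \ref{osnGr}.
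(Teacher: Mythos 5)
Your argument is correct and is essentially the same strategy the paper uses, only spelled out in full: the paper's proof of Theorem~\ref{osn} is a one-sentence sketch (``induction on dimension of support, lift generators and relations from the graded picture''), and your downward induction on the support codimension $r$, the lifts $\tilde g_\alpha$ and the correction terms $\sigma_\beta$ producing $\tilde\rho_\beta\in\ker\tilde\pi$ are precisely the details left implicit there. The only points worth flagging are minor bookkeeping ones you already handle: finiteness of the filtration ($F^{n+1}\tilde N_0=0$ because $X$ is quasi-projective of finite dimension $n$, so $N_0$ has no generators attached to support codimension $>n$), and the fact that the kernel of $N_0\to Gr\Omega^*(X)$ splits along the support-codimension index so each relation generator can be taken homogeneous in both gradings.
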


\begin{proof}
The statement follows from the graded case by induction on the dimension of support, using the fact
that the "minimal" generators of $Gr\Omega^*$ can be chosen as projections of "minimal" generators
for $\Omega^*$ (generators of Chow groups), and
any relation there can be lifted to one in $\Omega^*$ (proven, again, by a similar
induction).
\Qed
\end{proof}

This extends the result of M.Levine and F.Morel - \cite[Th.1.2.19]{LM} claiming that the generators
of $\Omega^*(X)$ are in non-negative codimensions.

Applying the above Theorem to the case of a curve, we obtain:

\begin{thm}
\label{curve}
Let $C$ be a smooth curve over $k$. Then:
$$
\Omega^*(C)=\laz\cdot 1\oplus\op{CH}_0(C)\otimes_{\zz}\laz
$$
\end{thm}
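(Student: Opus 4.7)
The plan is to apply Theorem \ref{osn} to $C$, exploiting the fact that all codimensional data on a curve is concentrated in codimensions $0$ and $1$. First I note that $\Omega^c(C)=0$ for $c>1$, and that the natural surjection $\Omega^1(C)\twoheadrightarrow\op{CH}^1(C)=\op{CH}_0(C)$ is an isomorphism: its kernel is the codimension-$1$ part of $\laz^{<0}\cdot\Omega^*(C)$, which lies inside $\laz^{<0}\cdot\Omega^{>1}(C)=0$. Using this identification, I would construct the $\laz$-module map
\[
\varphi\colon\laz\cdot 1\,\oplus\,\bigl(\op{CH}_0(C)\otimes_{\zz}\laz\bigr)\lrow\Omega^*(C),
\]
sending $1\mapsto[C\stackrel{\op{id}}{\row}C]$ and $[x]\otimes u\mapsto[x]\cdot u$, with the copy of $\op{CH}_0(C)$ placed in codimension $1$. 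Both sides are then concentrated in codimensions $\leq 1$.

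For surjectivity, by \cite[Th.1.2.19]{LM} it suffices to hit $\Omega^0(C)$ and $\Omega^1(C)$. Codimension $1$ is immediate. For codimension $0$, a generator $[V\stackrel{v}{\row}C]$ with $\ddim V=1$ is either nondominant---each component factoring through a closed point and giving a class of the form $[V]\cdot[x]$ with $[V]\in\laz^{-1}=\zz\cdot[\pp^1]$ by the projection formula---or dominant of generic degree $d$, in which case $[v]-d\cdot[C]$ lies in the kernel of $\Omega^0(C)\twoheadrightarrow\op{CH}^0(C)=\zz$, which by the same vanishing argument equals $\laz^{-1}\cdot\op{CH}_0(C)$. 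In either case the class lies in $\op{Im}(\varphi)$.

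For injectivity, I would pick a free-abelian presentation $0\row R\row F\row\op{CH}_0(C)\row 0$ with $F$ free on the closed points of $C$, and lift it to a free $\laz$-module cover
\[
M_0:=\laz\cdot e_0\,\oplus\,(F\otimes_{\zz}\laz)\twoheadrightarrow\Omega^*(C),
\]
with $e_0$ in codimension $0$ and the $F$-part in codimension $1$. By Theorem \ref{osn} the kernel of this cover is generated as an $\laz$-module by elements in strictly positive codimension; since $M_0$ itself is concentrated in codimensions $\leq 1$, such generators can only live in codimension $1$, where the map restricts to the defining surjection $F\twoheadrightarrow\op{CH}_0(C)=\Omega^1(C)$ with kernel exactly $R$. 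Using $\zz$-flatness of $\laz$, the kernel of $M_0\row\Omega^*(C)$ equals $R\otimes_{\zz}\laz\subset F\otimes_{\zz}\laz$, and quotienting delivers the claimed direct sum decomposition. The crux is the invocation of Theorem \ref{osn}: \emph{a priori} one could worry about extra Lazard-module relations in codimension $0$ between expressions like $[\pp^1]\cdot[x]$ and $[C]$, and it is precisely the positive-codimension conclusion of the theorem that rules these out.
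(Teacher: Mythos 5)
Your argument is essentially the same as the paper's: identify the relevant codimensions (only $0$ and $1$ occur for a curve), observe that $\Omega^1(C)=\op{CH}_0(C)$ because nothing in codimension $>1$ survives, and then kill potential codimension-$0$ relations among $\laz$-generators by invoking the positive-codimension-relations result. The paper states this more compactly, using that $\Omega^*$ is a \emph{constant} theory to obtain $Gr\Omega^*(C)=\Omega^*(C)$ and $\Omega^*_{(0)}(C)=\laz$ directly, and then applies Theorem~\ref{osnGr} to the codimension-$1$ graded piece.

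One imprecision worth flagging: Theorem~\ref{osn} is an \emph{existence} statement (there exists \emph{some} presentation with generators in codimensions $\geq0$ and relations in codimensions $\geq1$); it does not, read literally, assert that the kernel of \emph{your} chosen cover $M_0\twoheadrightarrow\Omega^*(C)$ is generated in positive codimensions. (In general a surjection from a free module with generators in non-negative codimensions can have codimension-$0$ relations, e.g.\ $\laz^2\row\laz$, $e_1,e_2\mapsto 1$.) What you actually need, and what the paper establishes in the body of the proof of Theorem~\ref{osnGr}, is the stronger claim that for the \emph{specific} surjection $\op{CH}^*(X)\otimes_{\zz}\laz\twoheadrightarrow Gr\Omega^*(X)$ the kernel is generated in positive codimensions; since your $M_0=\laz\cdot e_0\oplus(F\otimes\laz)$ is exactly a free cover lifting $\op{CH}^*(C)\otimes\laz$ (with $\kker(M_0\row\op{CH}^*(C)\otimes\laz)=R\otimes\laz$ already in codimension $1$), the conclusion then does follow. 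So the argument goes through, but you should cite the claim inside the proof of Theorem~\ref{osnGr} rather than the bare statement of Theorem~\ref{osn}.
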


\begin{proof}
Indeed, since $\Omega^*$ is {\it constant},
we have natural identifications $Gr\Omega^*(C)=\Omega^*(C)$ and $\Omega^*_{(0)}(C)=\laz$.
At the same time, all the relations in $\Omega^*_{(1)}(C)$ are in codimension $1$, and so are already
accounted for in $\Omega_0(C)=\op{CH}_0(C)$.
\Qed
\end{proof}

Notice, that this result can be obtained also with the help of the algebro-geometric version of
Atiyah-Hirzerbruch spectral sequence (introduced in \cite{HM} and \cite{Ho}) using arguments
similar to that of \cite{Lcomp}.
At the same time, the author does not see how to prove the Theorem \ref{osn} using this method.

On the base of the small knowledge we possess about $\Omega^*$,
I would make the following guess about {\it syzygies} of our module:

\begin{conj}
\label{syz}
Let $X$ be a smooth variety of dimension $d$.
Then $\Omega^*(X)$ as a module over $\laz$ has a free resolution whose $j$-th term has
generators concentrated in codimensions between $j$ and $d$.
In particular, the cohomological dimension of the $\laz$-module $\Omega^*(X)$ is $\leq d$.
\end{conj}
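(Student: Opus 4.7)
\emph{The plan.} As in the paper's passage from Theorem \ref{osnGr} to Theorem \ref{osn}, I would first establish a stronger graded version for $Gr\Omega^*(X)$ and then lift to $\Omega^*(X)$ by induction on the dimension of support. The graded statement would be proved by induction on $j$, constructing a graded free resolution
\[
\cdots \to M_{j+1} \to M_j \to \cdots \to M_0 \to Gr\Omega^*(X) \to 0
\]
with the generators of $M_j$ placed in the codimensional range $[j, d]$. The case $j = 0$ is the Levine--Morel surjection \cite[Cor.4.5.8]{LM} combined with the vanishing $\Omega^c(X) = 0$ for $c > d$: take $M_0$ free on lifts of a set of generators of $\op{CH}^*(X)$, which live in codimensions $0, 1, \ldots, d$. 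The case $j = 1$ is Theorem \ref{osnGr} itself, together with the upper-bound observation that $\laz$ is concentrated in nonpositive codimensions, so $M_0$ lives in codimensions $\leq d$ as a graded module and hence so does its submodule $K_0 = \ker(M_0 \to Gr\Omega^*(X))$.

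\emph{Inductive step.} Given $M_0, \ldots, M_j$ with the property, set $K_j = \ker(M_j \to M_{j-1})$. The upper bound ``$\leq d$'' for $K_j$ is automatic from the same graded argument. The lower bound ``$\geq j+1$'' is the substantive step, and should follow by replaying the symmetric-operations mechanism of Theorem \ref{osnGr} on $K_j$ in place of $Gr\Omega^*(X)$: given a relation $\alpha$ in $K_j$ of codimension $\leq j$, apply the Total Symmetric operation $\Phi$ and Total Steenrod operation $St$ of Section \ref{SO}, invoke Corollary \ref{StId} and Proposition \ref{SymIm} to write $\alpha \equiv p\alpha_1 + \beta_1 \pmod{I(p)^{m+1}}$ with $\beta_1$ a relation of strictly higher codimension, iterate on $m$, and deduce that $\alpha$ itself lies in the $\laz$-submodule generated by relations of codimension $> j$. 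At stage $j = d+1$ the range $[d+1, d]$ is empty, forcing $M_{d+1} = 0$ and yielding the bound on cohomological dimension.

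\emph{Main obstacle.} The critical gap is the coherent lifting of the symmetric operations from $Gr\Omega^*(X)$ to every syzygy module $K_j$. The operations $\Phi$ and $St$ are defined on $\Omega^*$ (and inherited by $Gr\Omega^*$), but their action on a subquotient of an abstract free $\laz$-module is not automatic; moreover $\Phi$ is only semi-additive, which is precisely what made even the $j = 1$ case more delicate than one might expect. The natural remedy is to build the resolution geometrically, from classes of projective maps $[V \row X]$ on which $\Phi$ and $St$ act by the concrete formulas used in Theorem \ref{osnGr}; this requires a careful choice of the free modules $M_j$ so that the kernels $K_j$ remain ``geometric''. An alternative route is via the algebro-geometric Atiyah--Hirzebruch spectral sequence of \cite{HM,Ho}, whose differentials $d_r$ necessarily vanish for $r > d$ by dimension reasons and which therefore yields a length-$d$ filtration essentially for free; upgrading this filtration to a genuine free resolution with the claimed generator codimensions would be the remaining step.
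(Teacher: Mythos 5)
This statement is Conjecture \ref{syz}: the paper gives no proof of it. The author explicitly calls it a ``guess,'' and what surrounds the conjecture is not an argument but evidence --- an explicit computation for Rost motives showing that $\Omega^*(M^{\Omega}_{\alpha})=\laz\cdot e^0\oplus I(2,n-2)\cdot e_0$ and that the Koszul complex on the regular sequence $v_0,\ldots,v_{n-2}$ resolves the ideal $I(2,n-2)$ with generators in precisely the predicted codimension range, whence Proposition \ref{exc} for excellent quadrics. So there is nothing in the paper against which your ``proof'' can be checked as a proof; the appropriate comparison is whether your sketch resolves the obstruction that left this a conjecture.

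It does not, and you have identified the obstruction yourself. The operations $\Phi$ and $St$ act on $\Omega^*$ and descend to $Gr\Omega^*$, and the mechanism of Theorem \ref{osnGr} crucially uses the explicit formulas for their action on cycles $z\cdot u$ in $Gr BP^*_{(r)}$. The syzygy module $K_j=\ker(M_j\to M_{j-1})$ for $j\geq 1$ is a submodule of an abstract free $\laz$-module; there is no cohomology theory on which to run symmetric operations, and ``semi-additivity'' of $\Phi$ makes even transporting it along a $\laz$-linear surjection problematic. Your suggested remedy --- choose the $M_j$ so that each $K_j$ stays ``geometric'' --- is exactly the missing idea, and nothing in the paper supplies it. The Atiyah--Hirzebruch alternative also falls short of the conjecture: the vanishing of $d_r$ for $r>d$ gives a filtration of length $\leq d$ on $\Omega^*(X)$, but a finite filtration of a $\laz$-module is far weaker than a free resolution of length $\leq d$ with the stated codimension bounds on generators, and the paper itself cautions that the author ``does not see how to prove Theorem \ref{osn}'' (let alone the conjecture) by that route. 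In short: your plan is a reasonable reading of what a proof ought to look like and is consistent with the paper's $j=0,1$ results, but the inductive step $j\mapsto j+1$ is an open problem, not a gap you can close by ``replaying'' the existing argument.
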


Note, that the cohomological dimension of the $\laz$-module $\Omega^*(X)$ together with the
information on where the generators of syzygies are located should be an interesting invariant
of a variety measuring it's complexity.

\begin{exa}
\label{Rost}
Rost motives:
\end{exa}
Aside from the case of a curve, one of the few cases where the ring $\Omega^*(X)$ was computed
is that of a Pfister quadric $Q_{\alpha}$, where $\alpha=\{a_1,\ldots,a_n\}$ is a {\it pure symbol}
in $K^M_n(k)/2$. The cobordism motive $M^{\Omega}(Q_{\alpha})$ splits as a direct sum
of (cobordism) {\it Rost-motives} - see \cite{VY}:
$$
M^{\Omega}(Q_{\alpha})=\oplus_{i=0}^{2^{n-1}-1}M^{\Omega}_{\alpha}(i)[2i].
$$
The Rost-motive $M^{\Omega}_{\alpha}$ can be realized as a motive of affine $(2^{n}-2)$-dimensional
quadric $A=Q_{\alpha}\backslash P$, where $P$ is any codimension $1$ subquadric of $Q_{\alpha}$.
Over algebraic closure $\kbar$ the Rost-motive splits into the sum of two Tate-motives:
$M^{\Omega}_{\alpha}|_{\kbar}=\llaz\oplus\llaz(2^{n-1}-1)[2^n-2]$.

It was shown in \cite[Th.3.5]{VY} that the restriction to the algebraic closure homomorphism
is injective on $\Omega^*(M^{\Omega}_{\alpha})$ and
$$
\Omega^*(M^{\Omega}_{\alpha})=\laz\cdot e^0\oplus I(2,n-2)\cdot e_0,
$$
where $e^0$ is the generic cycle while $e_0$ is represented by any middle-dimensional plane on
$Q_{\alpha}|_{\kbar}$,
and $I(2,n-2)\subset\laz$ is the ideal generated by the classes of the varieties of dimension
$\leq 2^{n-2}-1$ whose all characteristic numbers are even.

The ideal $I(2,n-2)$ is generated by $2=v_0,v_1,\ldots,v_{n-2}$, where $\ddim(v_i)=2^i-1$.
These $v_i$'s correspond to the classes $P_i$ in the $BP$-theory.
Since $v_i$ form a regular sequence in $\laz$, the {\it Koszul complex}
$R_{*}=\ov{\Lambda}_{\laz}^{*-1}(v_0,\ldots,v_{n-2})$ (where $\ov{\Lambda}^{*}$
is the positive degree part of the external algebra $\Lambda^{*}$) with the natural differential is
the free $\laz$-resolution of $I(2,n-2)$.

For $I\subset\{0,\ldots,n-2\}$, the codimension of the generator $(\Lambda_{i\in I}v_i)\cdot e_0$
of our resolution will be $(2^{n-1}-1)-\sum_{i\in I}(2^i-1)$. Notice, that this number belongs to
the range between $|I|$ and $(2^{n-1}-1)$. Moreover, for the highest generator of
the external algebra $(\Lambda_{i\in\{0,\ldots,n-2\}}v_i)\cdot e_0$ the codimension will be
exactly $(n-2)$. And the respective group
$\op{Tor}^{\laz}_{n-2}(\Omega^*(M^{\Omega}_{\alpha}),\zz/2)$ is really non-trivial.
This shows that the estimate in the Conjecture above can not be improved.

Since the (cobordism) motives of the Pfister quadrics, and more generally, {\it excellent quadrics}
(see \cite{Kn2}, or \cite[Introduction]{EC} for the definition)
are direct sums of Rost-motives (see \cite[Pr.2.4]{R} and \cite[Cor.2.8]{VY})
we easily get:

\begin{prop}
\label{exc}
The Conjecture \ref{syz} is true for excellent quadrics. In particular, for Pfister quadrics.
\end{prop}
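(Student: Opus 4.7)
The plan is to reduce the statement to a single Rost motive via the motivic decomposition, exhibit a concrete free resolution using the Koszul complex resolving $I(2,n-2)$, and verify the codimensional bounds of Conjecture \ref{syz} by a direct calculation of the codimensions of the Koszul generators.

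First, I would use the splitting $M^{\Omega}(Q_\alpha)=\bigoplus_{i=0}^{2^{n-1}-1}M^{\Omega}_\alpha(i)[2i]$ for Pfister quadrics (and the analogous decomposition for arbitrary excellent quadrics from \cite[Pr.2.4]{R} and \cite[Cor.2.8]{VY}) to split $\Omega^*(Q_\alpha)$ into Tate-shifted copies of $\Omega^*(M^{\Omega}_\alpha)$. It then suffices to resolve each Tate-shifted Rost-motive summand and assemble the result as a direct sum.

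For a single Rost motive, the identification $\Omega^*(M^{\Omega}_\alpha)=\laz\cdot e^0\oplus I(2,n-2)\cdot e_0$ reduces matters to resolving $I(2,n-2)$ as an $\laz$-module. Since $v_0,v_1,\ldots,v_{n-2}$ form a regular sequence in $\laz_{(2)}$ (reducing via the $BP$ summand to the standard regularity of $2,v_1,\ldots,v_{n-2}$ in $BP=\zz_{(2)}[v_1,v_2,\ldots]$), the Koszul complex $\ov{\Lambda}_\laz^{*-1}(v_0,\ldots,v_{n-2})$ is a free resolution of $I(2,n-2)$. Twisting by the codimension $2^{n-1}-1$ of $e_0$ and adjoining $\laz\cdot e^0$ in codimension $0$ produces a free resolution $P_\bullet\to\Omega^*(M^{\Omega}_\alpha)$ in which $P_0$ has generators $e^0$ and $v_i\cdot e_0$ ($i=0,\ldots,n-2$) in codimensions $0$ and $2^{n-1}-2^i$, while for $j\geq 1$ the term $P_j$ has one generator per $(j+1)$-subset $I\subseteq\{0,\ldots,n-2\}$, sitting in codimension $(2^{n-1}-1)-\sum_{i\in I}(2^i-1)=|I|+\bigl((2^{n-1}-1)-\sum_{i\in I}2^i\bigr)$.

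The verification of the bounds is then a direct calculation. From $\sum_{i\in I}2^i\leq 2^{n-1}-1$ one sees that every generator of $P_j$ ($j\geq 1$) lies in codimension $\geq |I|=j+1\geq j$ and $\leq 2^{n-1}-1$, and the $P_0$ generators lie between $0$ and $2^{n-1}-1$. Applying a Tate twist by $i\in\{0,\ldots,2^{n-1}-1\}$ in the Pfister (or excellent) sum adds $i$ to every codimension: the lower bound $\geq j$ is preserved trivially, while the upper bound becomes $i+(2^{n-1}-1)\leq 2(2^{n-1}-1)=2^n-2=d=\dim Q_\alpha$, exactly saturating the conjectural bound. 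The only non-formal ingredient is the regular-sequence property of $v_0,\ldots,v_{n-2}$ in $\laz_{(2)}$, which guarantees exactness of the Koszul complex; once that is granted, the argument is pure bookkeeping of codimensions, and the sharpness is built into the saturation of the upper bound.
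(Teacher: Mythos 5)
Your proof is correct and follows essentially the same route as the paper: decompose the (excellent) quadric motive into Tate-shifted Rost motives, identify $\Omega^*(M^{\Omega}_\alpha)=\laz\cdot e^0\oplus I(2,n-2)\cdot e_0$, and use the Koszul resolution of $I(2,n-2)$ together with the codimension computation $(2^{n-1}-1)-\sum_{i\in I}(2^i-1)\in[|I|,\,2^{n-1}-1]$. The only difference is that you spell out the Tate-twist bookkeeping (adding $i\leq 2^{n-1}-1$ and noting $i+(2^{n-1}-1)\leq 2^n-2=d$) which the paper leaves implicit after the phrase ``we easily get.''
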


The same can be seen for the {\it generalized Rost motive} for odd primes, at least $p$-adically,
from the computations of N.Yagita - see \cite{YaABPNV}.

This provides a tiny bit of evidence in support of the Conjecture.


\begin{thebibliography}{10}

\bibitem{Ad}
  J.\,F.\,Adams, {\it Stable Homotopy and Generalized Homology}, Univ. of Chicago Press,
   Chicago, ill., 1974.

\bibitem{BP}
  E.\,H.\,Brown,\, F.\,P.\,Peterson, {\it A spectrum whose $\zz_p$-cohomology is the algebra of reduced
   $p$-th powers}, Topology {\bf 5} (1966), 149-154.

\bibitem{HM}
  M.\,Hopkins,\, F.Morel, {\it Slices of MGL.} lecture (Hopkins), Harvard Univ. Dec. 2, 2004.

\bibitem{Ho}
  M.\,Hoyois, {\it From Algebraic Cobordism to Motivic Cohomology}, arXiv:1210.7182 [math.AG] 29 April 2013,
  40 pages.

\bibitem{Kn2}
  M.\,Knebusch,\ {\it Generic splitting of quadratic forms, II.},
    Proc. London Math. Soc. {\bf 34} (1977), 1-31.

\bibitem{Lcomp}
  M.\,Levine, {\it Comparison of cobordism theories}, J. Algebra {\bf 322} (2009), no. 9,
   3291-3317.

\bibitem{LM}
  M.\,Levine,\, F.\,Morel, {\it Algebraic cobordism}, Springer Monographs in
    Mathematics, Springer-Verlag, 2007.

\bibitem{LP}
  M.Levine, R.Pandharipande, {\it Algebraic cobordism revisited}, Invent. Math., {\bf 176} (2009), no. 1, 63-130.

\bibitem{P}
  I.\,Panin, {\it Oriented Cohomology Theories of Algebraic Varieties}, K-theory J.,
    {\bf 30} (2003), 265-314.

\bibitem{PS}
  I.\,Panin, A.\,Smirnov, {\it Push-forwards in oriented cohomology theories of algebraic varieties},
    K-theory preprint archive, 459, 2000. http://www.math.uiuc.edu/K-theory/0459/

\bibitem{Q}
  D.Quillen, {\it On the formal group laws of unoriented and complex
    cobordism theory}, AMS Bulletin, {\bf 75} part 2 (1969), 1293-1298.,

\bibitem{R}
  M.\,Rost, {\it Some new results on the Chow groups of quadrics}, Preprint,
   Regensburg, 1990.

\bibitem{Sm1}
  A.\,Smirnov, {\it Orientations and transfers in cohomology of algebraic varieties},
    St. Petersburg Math. J., {\bf 18}, n.2, (2007), 305-346.

\bibitem{EC}
  A.\,Vishik, {\it Excellent connections in the motives of quadrics},
    Ann. Scient. \'Ec. Norm. Sup. $4^e$ s\'erie, {\bf 44} (2011), p.183-195.

\bibitem{SU}
  A.\,Vishik, {\it Stable and Unstable Operations in Algebraic Cobordism},
    arXiv:1209.5793 [math.AG], 25 Sep. 2012, 65 pages.

\bibitem{SOpSt}
  A.\,Vishik, {\it Symmetric operations for all primes and Steenrod operations in
     Algebraic Cobordism}, arXiv:1309.5644 [math.AG], 22 Sep. 2013, 18 pages.

\bibitem{PO}
  A.\,Vishik, {\it Operations and poly-operations in Algebraic Cobordism},
    arXiv:1409.0741 [math.AG], 2 Sep. 2014, 26 pages.

\bibitem{VY}
  A.\,Vishik,\, N.\,Yagita, {\it Algebraic cobordisms of a Pfister
   quadric}, J. of London Math. Soc., {\bf 76} (2007), n.2, 586-604.

\bibitem{YaABPNV}
  N.\,Yagita, {\it Algebraic $BP$-theory and Norm Varieties}, Hokkaido Math. J.,
   {\bf 41}, n.2 (2012), 275-316.

\bibitem{Wi}
  W.\,S.\,Wilson, {\it Brown-Peterson homology: an introduction and sampler},
   Conference Board of the Mathematical Sciences, Regional Conference Series in Mathematics,
   {\bf 48}, AMS, Providence, Rhode Island, 1980, 1-86.

\end{thebibliography}
\end{document}